\documentclass[11pt]{amsart}

\usepackage[utf8]{inputenc}
\usepackage{amsmath, amssymb, amsthm}
\usepackage{geometry}
\usepackage{etoolbox}
\newcommand{\Nat}{\mathbf{N}}
\newcommand{\FS}{\mathrm{FS}}

\newtheorem*{theorem*}{Theorem}
\newtheorem{theorem}{Theorem}[section]

\newtheorem{proposition}{Proposition}[section]

\makeatletter
\AtBeginEnvironment{proof}{\setcounter{claim}{0}}
\makeatother

\theoremstyle{definition}

\title{An equivalent form of Hindman's Theorem}
\author{Lorenzo Carlucci}
\address{Dipartimento di Matematica\\ Sapienza University of Rome\\
Piazzale Aldo Moro 5, 00185 Roma, Italy}
\email{lorenzo.carlucci@uniroma1.it}

\author{Andrea D'Amico}
\address{
Dipartimento di Informatica\\ Sapienza University of Rome\\
 Viale Regina Elena, 295, 00161 Roma, Italy}
\email{damico.2046502@studenti.uniroma1.it}

\date{}

\begin{document}

\begin{abstract}
We prove the equivalence of Hindman's finite sums theorem with a 
natural extension of Ramsey's theorem. 
\end{abstract}

\maketitle

\section{Introduction}
The following theorem was proved by Neil Hindman in \cite{Hin:74}: 

\begin{theorem*}[Hindman's finite sums theorem]
Every finite coloring $c$ of the positive integers admits an infinite increasing
sequence $X = (x_j)_{j \in\Nat}$ of positive integers such that all finite non-empty sums of distinct elements from $X$ have the same color under $c$.
\end{theorem*}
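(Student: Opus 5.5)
The plan is the Galvin--Glazer ultrafilter proof. It is the shortest route that makes every step checkable.

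\emph{Step 1: the semigroup structure.} I will work in $\beta\Nat$, the space of ultrafilters on the positive integers. I extend addition by
\[
p+q=\{A\subseteq \Nat : \{n : A-n\in q\}\in p\},\qquad A-n=\{m : m+n\in A\}.
\]
Routine checks show that $p+q$ is an ultrafilter, that $+$ is associative, and that for fixed $q$ the map $p\mapsto p+q$ is continuous in the Stone topology (basic clopen sets are $\hat A=\{p: A\in p\}$). So $\beta\Nat$ is a compact Hausdorff right-topological semigroup. The subset of nonprincipal ultrafilters is a closed subsemigroup, since translates of cofinite sets are cofinite.

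\emph{Step 2: an idempotent.} I apply the Ellis--Numakura lemma: every nonempty compact Hausdorff right-topological semigroup $S$ has an idempotent. The proof goes by Zorn's lemma. I take a minimal nonempty closed subsemigroup $T$ and fix $e\in T$. Then $T+e$ is a closed subsemigroup (by right continuity), so $T+e=T$. Hence $\{t\in T: t+e=e\}$ is nonempty, closed, and a subsemigroup, and by minimality it equals $T$; in particular $e+e=e$. I expect this step to be the main obstacle, since it carries the real compactness content. Applying it to the nonprincipal ultrafilters gives a nonprincipal $p$ with $p+p=p$.

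\emph{Step 3: extracting the sequence.} Given the coloring $c$ with finitely many colors, exactly one color class $A$ lies in $p$. For $B\in p$ set $B^\star=\{n\in B : B-n\in p\}$. Since $p+p=p$, we get $B^\star\in p$, and for $n\in B^\star$ we have $(B^\star-n)\in p$. This holds because $B^\star-n=(B-n)\cap\{m: B-(m+n)\in p\}$, and both sets are in $p$ by idempotence (using $B-(m+n)=(B-n)-m$).

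I then build the sequence recursively. Set $A_0=A$. Choose $x_0\in A_0^\star$, and let $A_1=A_0^\star\cap(A_0^\star-x_0)$, which lies in $p$. In general, choose $x_{j}\in A_{j}^\star$ with $x_j>x_{j-1}$; this is possible because $p$ is nonprincipal, so members of $p$ are infinite. Then let $A_{j+1}=A_j^\star\cap(A_j^\star-x_j)\in p$.

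By induction on the number of terms, every finite sum $x_{j_1}+\dots+x_{j_k}$ with $j_1<\dots<j_k$ lies in $A_{j_1}\subseteq A$. To see this, note that $x_{j_2}+\dots+x_{j_k}\in A_{j_1+1}\subseteq A_{j_1}^\star-x_{j_1}$, so adding $x_{j_1}$ lands in $A_{j_1}^\star\subseteq A_{j_1}$. Hence all finite sums have the color of $A$, and the sequence is increasing, which proves the theorem.
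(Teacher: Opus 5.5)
Your argument is correct. It is the Galvin--Glazer proof, and the details hold up:
\begin{itemize}
\item the preimage of $\hat A$ under $p\mapsto p+q$ is $\widehat{\{n : A-n\in q\}}$, which gives right continuity;
\item your Ellis--Numakura argument is complete;
\item the identity $B^\star-n=(B-n)\cap\{m:(B-n)-m\in p\}$ is exactly what keeps the recursion going.
\end{itemize}
You use silently that the sets $A_j$ decrease, which you need to pass from $A_{j_2}$ to $A_{j_1+1}$ in the final induction. That is immediate from $A_{j+1}\subseteq A_j^\star\subseteq A_j$.

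The paper never proves this statement. It quotes Hindman's theorem from Hindman's original 1974 paper and only shows that it is equivalent to Theorem A. In one direction it uses $c(x)=f(\{0,x\})$ with prefix sums $s_j=\sum_{w\le j}x_w$; in the other it uses $f(Y)=c(A(Y))$ with consecutive differences $x_j=s_{j+1}-s_j$. So your proof does not compete with anything the paper does; it supplies the paper's external input. Combined with the paper's first proposition, it would also yield a proof of Theorem A.

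The two contributions buy different things. Your route is short and self-contained, but it is thoroughly non-constructive (Zorn's lemma and ultrafilters on the integers), so it says nothing about the logical or computational strength of the theorem. The paper's reductions prove neither theorem outright. They are, however, uniform computable (indeed strong Weihrauch) reductions, and that is what lets reverse-mathematical and computability-theoretic information transfer between Hindman's theorem and Theorem A, as the paper's conclusion points out.
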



For any set $X\subseteq\Nat$ we denote by $\FS(X)$ the set of all finite non-empty sums of distinct elements from $X$; we denote by $\mathcal{E}(X)$ the set of all finite non-empty subsets of $X$ with even cardinality. Let $Y\in \mathcal{E}(\Nat)$ and let $\{y_1,y_2,\dots,y_{2n}\}$ be the enumeration of $Y$ in increasing order. We define its alternating sum of consecutive differences as follows:
 $$A(Y):=\sum_{i=1}^{n}(y_{2i}-y_{2i-1}).$$

We call a coloring $f: \mathcal{E}(\Nat) \to k$ {\em invariant} if it satisfies the following property: for all $Y, Z \in \mathcal{E}(\Nat)$ such that $A(Y) = A(Z)$, the equality $f(Y) = f(Z)$ holds. 


Our main goal in this note is to prove the equivalence of Hindman's theorem with the following theorem.

\begin{theorem*}[Theorem A]
For every invariant $k$-coloring $f:\mathcal{E}(\Nat)\rightarrow k$, there exists an infinite increasing sequence $S$of positive integers such that $f$ is constant on all non-empty subsets of $S$ of even cardinality.
\end{theorem*}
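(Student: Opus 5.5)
Theorem A follows from Hindman's finite sums theorem: we transfer the invariant coloring $f$ to a coloring of the positive integers and then read off $S$ from the partial sums of a Hindman sequence. Since $f$ is invariant, define $c:\Nat\to k$ by $c(m) := f(\{0,m\})$. Then for every $Y\in\mathcal{E}(\Nat)$ we have $A(\{0,A(Y)\})=A(Y)$, so $f(Y)=c(A(Y))$. For $m\ge 1$ the set $\{0,m\}$ is a genuine $2$-element set; if $0\notin\Nat$ in the paper's convention, use $\{1,m+1\}$ instead. Apply Hindman's theorem to $c$ to get an increasing sequence $X=(x_j)_{j\in\Nat}$ with $\FS(X)$ monochromatic, say of color $i$.

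Next, choose $S$ so that every alternating sum of consecutive differences of an even subset of $S$ lies in $\FS(X)$. Let $s_0$ be any positive integer and set $s_n := s_0 + x_1 + \dots + x_n$. Then $S=(s_n)$ is strictly increasing since each $x_j\ge 1$, and $s_b - s_a = \sum_{a<j\le b} x_j$ for $a<b$. Take $Y=\{s_{a_1}<s_{a_2}<\dots<s_{a_{2n}}\}\subseteq S$. Then
$$A(Y)=\sum_{i=1}^n \sum_{a_{2i-1}<j\le a_{2i}} x_j,$$
and the index intervals $(a_{2i-1},a_{2i}]$ are pairwise disjoint and nonempty. So $A(Y)$ is a finite nonempty sum of distinct elements of $X$, hence $A(Y)\in\FS(X)$ and $f(Y)=c(A(Y))=i$. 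Thus $f$ is constant on $\mathcal{E}(S)$. This is exactly the statement of Theorem A, since "non-empty subsets of even cardinality" means $\mathcal{E}(S)$.

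The main obstacle is identifying the encoding $s_n=\sum_{j\le n}x_j$. Once it is in place, the only check is that the disjointness of the intervals gives distinct summands, which is immediate. The converse, that Theorem A implies Hindman's theorem, is not part of this statement. For it I would expect to pull back a coloring $c$ of $\Nat$ to $f(Y):=c(A(Y))$, which is invariant by construction. The difficulty there is recovering an infinite sum set $\FS(X)$ from a homogeneous $S$, for instance by taking $x_j$ to be the differences $s_{2j}-s_{2j-1}$, so that sums of distinct $x_j$ arise as $A$ of unions of disjoint consecutive pairs.
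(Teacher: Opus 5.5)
Your proof is correct and follows the paper's argument. You pull back $f$ to $c(m)=f(\{0,m\})$, use invariance to get $f(Y)=c(A(Y))$, apply Hindman's theorem, and take $S$ to be the partial sums of the Hindman sequence, so that $A(Y)$ telescopes into a sum of $x_j$ over disjoint index intervals. Your positive offset $s_0$ is a small refinement that guarantees $S$ consists of positive integers, a point the paper leaves implicit with $s_j=\sum_{w=1}^j x_w$.
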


Note that the conclusion of Theorem A fails for arbitrary $f:\mathcal{E}(\Nat)\rightarrow k$. A simple counterexample is given by coloring $Y \in \mathcal{E}(\Nat)$ blue if $|Y| = 2 \pmod 3$ and red otherwise. Any infinite set contains subsets of cardinality $2$ and of cardinality $4$ that get different colors. The coloring is obviously not invariant. We prove the following theorem.

\begin{theorem}\label{thm:main}
Hindman's theorem is equivalent to Theorem A.
\end{theorem}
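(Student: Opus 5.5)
Both directions go through the same dictionary between even sets and their alternating sums, so we can handle them one at a time.

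\emph{Hindman's theorem implies Theorem A.} Let $f:\mathcal{E}(\Nat)\to k$ be invariant. Define $c:\Nat^+\to k$ by $c(m)=f(\{0,m\})$, or $f(\{1,m+1\})$ to stay inside the positive integers. Since $A(\{1,m+1\})=m$ and $f$ is invariant, $f(Y)=c(A(Y))$ for every $Y\in\mathcal{E}(\Nat)$. Apply Hindman's theorem to $c$ to get an increasing $X=(x_j)$ with $\FS(X)$ monochromatic, say of color $i$. Put $s_n=x_1+\dots+x_n$, with $s_0=0$ if $0$ is allowed; otherwise start from $s_1$. Let $S=\{s_n\}$, which is strictly increasing. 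For $Y=\{s_{n_1}<\dots<s_{n_{2m}}\}$ each difference $s_{n_{2i}}-s_{n_{2i-1}}$ equals $\sum_{n_{2i-1}<j\le n_{2i}}x_j$. These blocks of indices are pairwise disjoint and nonempty, so $A(Y)\in\FS(X)$. Hence $f(Y)=c(A(Y))=i$, and $f$ is constant on $\mathcal{E}(S)$.

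\emph{Theorem A implies Hindman's theorem.} Given $c:\Nat^+\to k$, set $f(Y)=c(A(Y))$. This is well defined because $A(Y)\ge 1$, and it is trivially invariant. Theorem A gives an infinite increasing $S=\{s_1<s_2<\dots\}$ with $f$ constant on $\mathcal{E}(S)$. Take $x_j=s_{2j}-s_{2j-1}$.

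A sum $x_{j_1}+\dots+x_{j_m}$ with $j_1<\dots<j_m$ equals $A(\{s_{2j_1-1},s_{2j_1},\dots,s_{2j_m-1},s_{2j_m}\})$. That set is increasing and of even size, so every element of $\FS(\{x_j\})$ gets the same color.

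The one snag is that $(x_j)$ need not be increasing, or even injective. To fix this, pass to a subsequence $S'$ of $S$ chosen so that the consecutive gaps grow: pick $s'_{2j}$ far beyond $s'_{2j-1}$, making $x_j$ larger than all earlier $x$'s. This is possible because $S$ is infinite, and $f$ remains constant on $\mathcal{E}(S')\subseteq\mathcal{E}(S)$. With strictly increasing $x_j$, the sets of distinct elements and of distinct indices coincide, so $\FS(X)$ is exactly the set of sums computed above.

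Neither direction has a real obstacle; the only care needed is this growth arrangement together with the positivity bookkeeping in the first direction.
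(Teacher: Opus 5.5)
Your proof is correct. The first direction is the paper's argument; using $\{1,m+1\}$ instead of $\{0,m\}$ only keeps everything inside the positive integers.

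The second direction uses a genuinely different encoding.

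\textbf{Your encoding.} You set $x_j=s_{2j}-s_{2j-1}$, pairing off disjoint consecutive couples of $S$. A sum of $m$ distinct terms is then read off directly as $A$ of the $2m$-element set formed by the corresponding couples. You thin $S$ so that the gaps grow, which makes distinct elements the same as distinct indices.

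\textbf{The paper's encoding.} It uses all consecutive gaps, $x_j=s_{j+1}-s_j$. It must then split an arbitrary index set $F$ into maximal runs $[a_v,b_v]$ and telescope each run. This produces $Y=\{s_{a_1},s_{b_1+1},\dots,s_{a_n},s_{b_n+1}\}$. It also uses the same thinning trick to make $X$ increasing.

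\textbf{What each buys.} Your version avoids the run decomposition and is slightly simpler. The paper's version is the exact inverse of the prefix-sum construction in the first direction. It is also more economical in cardinality: a sum over an index set with $n$ maximal runs needs only a $2n$-element set, so a sum of consecutive terms $x_a+\dots+x_b=s_{b+1}-s_a$ comes from a $2$-element set. That finer correspondence is what links the restriction of Theorem A to pairs with the Adjacent Hindman's Theorem, as mentioned in the introduction. Under your pairing, a sum of $m$ adjacent terms needs a $2m$-element set, so that link is lost. The remark in the conclusion, that cardinalities $2$ and $4$ give sums of at most two terms, still holds under either encoding.
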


The result generalizes the equivalence proved in \cite{Bre:26} between the restriction of Hindman's theorem to sums of consecutive elements of the solution sequence (introduced by the first author in \cite{Car:18:weak} under the name of Adjacent Hindman's Theorem) and the restriction of Theorem A to sets of cardinality $2$, called the $\mathbf{Z}$-invariant Ramsey's Theorem in \cite{Bre:26}.

We split the equivalence proof in two parts. 

\section{A proof of Theorem A from Hindman's theorem}

\begin{proposition}
Hindman's theorem implies Theorem A.
\end{proposition}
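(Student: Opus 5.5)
Since $f$ is invariant, it factors through $A$: define $c(m) := f(Y)$ for any $Y\in\mathcal{E}(\Nat)$ with $A(Y)=m$, for each positive integer $m$. Every positive integer arises as $A(\{0,m\})$, or as $A(\{1,m+1\})$ if one prefers to avoid $0$, and invariance makes $c$ well defined. This gives a finite coloring $c$ of the positive integers, and $f(Y) = c(A(Y))$ for all $Y$. Apply Hindman's theorem to $c$ to obtain an increasing sequence $X=(x_j)_{j\in\Nat}$ with $\FS(X)$ monochromatic. The task is then to build an increasing sequence $S=(s_i)$ such that $A(Y)\in \FS(X)$ for every $Y\in\mathcal{E}(S)$.

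Take the partial sums $s_0 = 0$ and $s_i = x_1+\dots+x_i$ for $i\geq 1$. If positive integers are required, shift everything by $1$, since $A$ is translation invariant. Now let $Y=\{s_{i_1}<s_{i_2}<\dots<s_{i_{2n}}\}$ with $i_1<\dots<i_{2n}$. Each difference is a block sum:
$$s_{i_{2l}}-s_{i_{2l-1}} = \sum_{j=i_{2l-1}+1}^{i_{2l}} x_j .$$
The index intervals $(i_{2l-1}, i_{2l}]$ for $l=1,\dots,n$ are pairwise disjoint and nonempty. Hence $A(Y)$ is a sum of distinct elements of $X$ over a nonempty finite index set, so $A(Y)\in\FS(X)$. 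Consequently $f(Y)=c(A(Y))$ takes the single Hindman color for every $Y\in\mathcal{E}(S)$, and $S$ is the required sequence.

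The only point needing care is the well-definedness of $c$. Every positive integer must be realized as $A(Y)$ for some admissible $Y$, which is immediate from two-element sets. Beyond that, we must make sure $S$ consists of positive integers and is strictly increasing. Positivity is handled by the shift above, and strict increase holds because each $x_j>0$. I expect no real obstacle; the key observation is that alternating sums of consecutive differences of partial sums are exactly sums over disjoint blocks.
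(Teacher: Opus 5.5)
Your proposal is correct and follows essentially the same route as the paper. You factor $f$ through $A$ via $c(m)=f(\{0,m\})$ (well defined by invariance), apply Hindman's theorem to $c$, and take $S$ to be the prefix sums of $X$, so that $A(Y)$ for $Y\in\mathcal{E}(S)$ becomes a sum of $x_w$ over disjoint nonempty index blocks and hence lies in $\FS(X)$; your shift by $1$ to keep $S$ within the positive integers is a small extra bit of care the paper omits.
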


\begin{proof}

    Let $f: \mathcal{E}(\Nat) \to k$ be an invariant coloring. Define $c: \Nat \to k$ as follows, for $x >0$:
    $$c(x) := f(\{0, x\}),$$
 and set $c(0):=0$. By Hindman's Theorem there exists an infinite sequence $X = (x_j)_{j \in \Nat}$ of positive integers and a fixed color $c^* < k$ such that all finite sums of elements from $X$ are colored $c^*$. 


Define the prefix sum sequence $S = (s_j)_{j \in \Nat}$ by setting:
    $$s_j := \sum_{w=1}^j x_w.$$
We want to show that all elements of $\mathcal{E}(S)$ have color $c^*$ under $f$.
  
Notice that, since $f$ is invariant, for any even cardinality non-empty set $Y$ of natural numbers (be it a subset of $S$ or not), we have the following equality: 
$$A(\{0, A(Y)\})=A(Y)-0=A(Y),$$ 
which yields $f(Y)=f(\{0, A(Y)\})$. Since $f(\{0, A(Y)\})=c(A(Y))$ by definition of $f$, it is sufficient for us to show that 
$c(A(Y))=c^*$ for every $Y \in \mathcal{E}(S)$. For this it is sufficient to show that, for such a $Y$, $A(Y) \in \FS(X)$. 

Let $Y = \{s_{j_1}, s_{j_2}, \dots, s_{j_{2n}}\}_<$ be any non-empty subset of $S$ with even cardinality. Consider the alternating sum of consecutive differences of elements of $Y$:
$$A(Y) = \sum_{i=1}^n (s_{j_{2i}} - s_{j_{2i-1}}).$$
    Recalling the prefix sum definition $s_j = \sum_{w=1}^j x_w$, we have that:
    $$s_{j_{2i}} - s_{j_{2i-1}} = \left(\sum_{w=1}^{j_{2i}} x_w\right) - \left(\sum_{w=1}^{j_{2i-1}} x_w\right) = \sum_{w=j_{2i-1}+1}^{j_{2i}} x_w.$$
Setting $F = \bigcup_{i=1}^n F_i$, with $F_i=[j_{2i-1}+1, j_{2i}]$ for $i$ such that $1\leq i \leq n$, we can write $A(Y)$ as a sum of finitely many distinct elements from $X$ as follows:
    $$A(Y)=\sum_{i=1}^{n}(s_{j_{2i}}-s_{j_{2i-1}})=\sum_{i=1}^{n}\sum_{w=j_{2i-1}+1}^{j_{2i}}x_w=\sum_{w \in F} x_w.$$
By hypothesis on $X$ we conclude that
    $$c(A(Y))=c\left(\sum_{w \in F} x_w\right) = c^*.$$
Therefore, for all $Y \in \mathcal{E}(S)$, the following holds:
    $$f(Y)=f(\{0, A(Y)\})=c(A(Y)) = c^*.$$
This proves Theorem A.
\end{proof}

\section{A proof of Hindman's theorem from Theorem A}

\begin{proposition}\label{prop:AtoHT}
Theorem A implies Hindman's theorem.
\end{proposition}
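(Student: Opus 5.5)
Given a finite coloring $c$ of the positive integers, I will define the invariant coloring $f:\mathcal{E}(\Nat)\to k$ by
$$f(Y):=c(A(Y)).$$
This is well defined because $A(Y)\geq 1$ for every $Y\in\mathcal{E}(\Nat)$: each difference $y_{2i}-y_{2i-1}$ is positive. It is invariant by construction, since $f$ depends on $Y$ only through $A(Y)$. Theorem A then provides an infinite increasing sequence $S=(s_j)_{j\in\Nat}$ on whose even-size subsets $f$ is constant, say with value $c^*$. The plan is to extract from $S$ a sequence $X$ such that every element of $\FS(X)$ equals $A(Y)$ for some $Y\in\mathcal{E}(S)$. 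Then $c$ takes the value $c^*$ on all of $\FS(X)$, which is Hindman's theorem.

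The natural choice mirrors the first proposition, read in reverse. I will take disjoint consecutive pairs of elements of $S$ and set
$$x_j:=s_{2j}-s_{2j-1}.$$
Now let $F$ be a finite non-empty set of indices. Put
$$Y_F:=\{s_{2j-1},s_{2j}: j\in F\}.$$
The pairs $\{s_{2j-1},s_{2j}\}$ occupy disjoint consecutive blocks of $S$, so in the increasing enumeration of $Y_F$ the elements at positions $2i-1$ and $2i$ are exactly $s_{2j-1}$ and $s_{2j}$ for the $i$-th element $j$ of $F$. Hence $Y_F\in\mathcal{E}(S)$ and
$$A(Y_F)=\sum_{j\in F}x_j.$$
It follows that
$$c\Big(\sum_{j\in F}x_j\Big)=f(Y_F)=c^*.$$

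It remains to make $X$ an infinite increasing sequence of positive integers, as the statement of Hindman's theorem requires. Positivity is automatic. The gaps $s_{2j}-s_{2j-1}$ need not increase, and I see two fixes. The first is to pass to a subsequence of $S$ first: any infinite subset of $S$ still satisfies the conclusion of Theorem A. One can choose the subsequence greedily so that the gaps between paired elements strictly increase, for instance by picking the next pair with $s_{2j}$ very large compared with $s_{2j-1}$. The second fix is to keep the $x_j$ as defined and extract an increasing subsequence. This also works, since $\FS$ of a subsequence is contained in $\FS$ of the whole sequence. There is one catch: the gaps could be bounded. The first fix handles that, because we can always take $s_{2j}$ far out in $S$.

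I expect the only delicate step to be checking that the enumeration of $Y_F$ pairs the elements as intended, so that $A(Y_F)$ really is $\sum_{j\in F}x_j$. This is where the disjointness and consecutiveness of the blocks $\{s_{2j-1},s_{2j}\}$ are used. Everything else is routine.
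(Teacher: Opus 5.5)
Your argument is correct, but it uses a different decomposition from the paper's. The paper takes all consecutive differences $x_j=s_{j+1}-s_j$, which is the literal inverse of the prefix-sum map from the first proposition. A sum $\sum_{w\in F}x_w$ is then handled in three steps:
\begin{itemize}
\item split $F$ into maximal runs $[a_v,b_v]$ of consecutive indices;
\item telescope each run to $s_{b_v+1}-s_{a_v}$;
\item use the gap condition $a_{v+1}>b_v+1$ to see that $\{s_{a_1},s_{b_1+1},\dots,s_{a_n},s_{b_n+1}\}$ is enumerated in the intended order.
\end{itemize}
Your disjoint pairs $x_j=s_{2j}-s_{2j-1}$ make each term its own block. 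So you need no run decomposition or telescoping, and the ordering check reduces to $2j<2j'-1$ for $j<j'$.

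What the paper's choice buys is a smaller witnessing set: $|Y|$ is twice the number of maximal runs of $F$, rather than $2|F|$. In particular, every consecutive sum $x_a+\dots+x_b=A(\{s_a,s_{b+1}\})$ is witnessed by a $2$-element set. So the same construction shows that the cardinality-$2$ restriction of Theorem A gives the Adjacent Hindman's Theorem, which is the earlier equivalence the paper generalizes. In your construction, $x_a+x_{a+1}$ is witnessed by the $4$-element set $Y_{\{a,a+1\}}$, so that refinement is lost. Both versions still give the Conclusion's remark about cardinalities $2$ and $4$ versus sums of at most two terms.

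For monotonicity, your first fix (thinning $S$ greedily before pairing) is the same as the paper's ``skipping some elements of $S$'' and suffices on its own. You can drop the second fix.
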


\begin{proof}
Let $c: \Nat \to k$. Define a coloring $f: \mathcal{E}(\Nat) \to k$ by setting, for $Y \in \mathcal{E}(\Nat)$,
    $$f(Y) := c(A(Y)).$$
The coloring $f$ is obviously invariant. Therefore, by Theorem A, there exists an infinite increasing sequence $S = (s_j)_{j \in \Nat}$ and a color $c^*<k$ such that all non-empty subsets of $S$ with even cardinality have color $c^*$ under $f$. 

Define $X = (x_j)_{j \in \Nat}$ by setting:
    $$x_j := s_{j+1} - s_j.$$
Without loss of generality (by possibly skipping some elements of the original set $S$) we can assume that $X$ is an infinite increasing sequence.

Let $F \subset \Nat$ be an arbitrary non-empty finite set of indices. We can uniquely partition $F$ into a finite sequence of $n\geq 1$ maximal contiguous intervals
    $$F = F_1 \cup F_2 \cup \dots \cup F_n,$$
    where $F_v = [a_v, b_v]$ and the gaps between intervals satisfy $a_{v+1} > b_v + 1$. When we evaluate the sum of elements from $X$ with indices in $F$, the result is the sum telescoping with respect to the $F_v$ blocks:
    $$\sum_{w \in F} x_w = \sum_{v=1}^n \sum_{w=a_v}^{b_v} x_w = \sum_{v=1}^{n}\left((s_{a_v+1}-s_{a_v})+(s_{a_v+2}-s_{a_v+1})+\dots+(s_{b_v+1}-s_{b_v})\right) =\sum_{v=1}^n (s_{b_v+1}-s_{a_v}).$$
    Since $s_{j+1} > s_j$ and $a_{v+1} > b_v + 1$, we are guaranteed that $s_{a_{v+1}} > s_{b_v+1}$. This means that the indices $a_v,b_v+1$, for $v$ such that $1\leq v \leq n$, appearing in the sum $\sum_{w \in F}x_w$ form a non empty subset of $S$ of even cardinality:
    $$Y = \{s_{a_1}, s_{b_1+1}, s_{a_2}, s_{b_2+1}, \dots, s_{a_n}, s_{b_n+1}\}_<.$$
    It is now easy to observe that $A(Y)=\sum_{w \in F}x_w$. Since $Y \in \mathcal{E}(S)$, we have $f(Y) = c^*$ by hypothesis on $S$. By definition of $f$ we have the following:
$$c\left(\sum_{w \in F} x_w\right) = c(A(Y)) = f(Y) = c^*.$$
    Thus $c$ is constant on $\FS(S)$ as required by Hindman's Theorem.

\end{proof}

\section{Conclusion}

The proof of Proposition \ref{prop:AtoHT} yields that Theorem A restricted to sets of cardinality equal to $2$ and $4$ implies 
the restriction of Hindman's Theorem to sums of at most two elements. Hindman, Leader and Strauss \cite{HLS:16} asked whether
the latter restriction admits a proof that does not also prove the full Hindman's Theorem. Such a proof of the 
restriction of Theorem A to cardinalities $2$ and $4$ would answer the question in the positive. 

Hindman's Theorem is also of interest in reverse mathematics and computability theory, 
where one is interested in the logical and algorithmic complexity of solutions to the theorem as a function of the complexity of the instance 
(see \cite[Section 9.9]{Dza-Mum:22}). The equivalence of Hindman's theorem with a natural Ramsey-type principle established in the present note can offer new insights in this respect, since standard Ramsey-type theorems have been thoroughly investigated (see \cite[Chapter 8]{Dza-Mum:22}). We observe that the proofs establish uniform computable reductions (more precisely strong Weihrauch reductions, see \cite{Dza-Mum:22}) between the two theorems involved.

\end{document}